\def\bb#1\eb{\textcolor{blue}
	{#1}} %
\def\br#1\er{\textcolor{red}
	{#1}} %
\def\bv#1\ev{\textcolor{green}
	{#1}} %
\def\bc#1\ec{\textcolor{cyan}
	{#1}} %
\def\bal#1\eal{\begin{align}#1\end{align}}                      %
\def\baln#1\ealn{\begin{align*}#1\end{align*}}          %di inizio e fine di
\def\bml#1\eml{\begin{multline}#1\end{multline}}        %
\def\bmln#1\emln{\begin{multline*}#1\end{multline*}}  %
\def\bga#1\ega{\begin{gather}#1\end{gather}}
\def\bgan#1\egan{\begin{gather*}#1\end{gather*}}
\newtheorem{theorem}{Theorem}[section]
\newtheorem{lemma}[theorem]{Lemma}
\newtheorem{remark}[theorem]{Remark}
\newcommand{\<}{\langle}
\renewcommand{\>}{\rangle}
\newcommand{\RR}{\mathbb R}
\newcommand{\CC}{\mathbb C}
\newcommand{\NN}{\mathbb N}
\def\R{{\mathbb R}}
\def\N{{\mathbb N}}
\renewcommand{\leq}{\leqslant}
\renewcommand{\ge}{\geqslant}
\renewcommand{\geq}{\geqslant}
\title[Magnetic fractional problems]{Asymptotically linear magnetic\\ fractional problems}
\author[R. Bartolo]{Rossella Bartolo}
\address{Rossella Bartolo\hfill\break\indent
	Dipartimento di Meccanica, Matematica e Management\hfill\break\indent
	Politecnico di Bari\hfill\break\indent
	Via E. Orabona 4, 70125 Bari\hfill\break\indent
	Italy}
\email{rossella.bartolo@poliba.it}
\author[P. d'Avenia]{Pietro d'Avenia}
\address{Pietro d'Avenia\hfill\break\indent
	Dipartimento di Meccanica, Matematica e Management\hfill\break\indent
	Politecnico di Bari\hfill\break\indent
	Via E. Orabona 4, 70125 Bari\hfill\break\indent
	Italy}
\email{pietro.davenia@poliba.it}
\author[G. Molica Bisci]{Giovanni Molica Bisci}
\address{Giovanni Molica Bisci\hfill\break\indent
	Dipartimento di Scienze Pure e Applicate (DiSPeA)\hfill\break\indent
	Universit\`a degli Studi di Urbino Carlo Bo\hfill\break\indent
	Piazza della Repubblica 13, 61029 Urbino\hfill\break\indent
	Italy}
\email{giovanni.molicabisci@uniurb.it}
\subjclass[2010]{Primary:
	49J35, 35A15, 35S15; 58E05.
	Secondary: 47G20, 45G05.}
\keywords{Magnetic fractional Laplacian;  variational methods; asymptotically linear problem; variational Dirichlet eigenvalues; abstract critical point theorem.}
\date{}
\begin{document}
	
	\begin{abstract}
		The aim of this paper is investigating the existence and multiplicity of weak solutions to non--local equations involving the {\em magnetic fractional Laplacian}, when the nonlinearity is subcritical and asymptotically linear at infinity. We prove existence and multiplicity results by using variational tools, extending to the magnetic local and non--local setting some known results for the classical and the fractional Laplace operators.\end{abstract}
	
	\maketitle

	\section{Introduction}
	Existence and multiplicity results for solutions of elliptic problems involving non--local operators have been
	faced by a large number of authors by using variational and topological methods also in view of applications; see, among others, the monograph \cite{MoRaSe} and the references therein.
	
	From a probabilistic point of view non--local operators can be seen as the infinitesimal generators of L\'{e}vy
	stable diffusion processes. Moreover, fractional operators allow us to model unusual diffusion processes in turbulent fluid motions and material transports in fractured media.\\
	In particular, the fractional Laplacian appears in generalizations of quantum mechanics and in the description of the motion of a chain or an array of particles that are connected by elastic springs (see \cite{Appl,Las,Poz}).
	
	Motivated by this wide interest in the current literature and by the meaning that the non--local operators can have in the applications, we are interested here in a nonlinear fractional problem involving an asymptotically linear term at infinity and the {\em fractional magnetic Laplacian}. Indeed, the main results (see Theorems \ref{main} and \ref{main1} below) give under quite general assumptions a non--local magnetic version of some previous results, already present in the current literature, that are valid for different classes of differential problems.
	
	More precisely, given $A\in C(\R^N,\R^N)$ we look for  solutions of the problem
	\begin{equation}
		\label{P}\tag{$P$}
		\begin{cases}
			(-\Delta)^s_Au=  g(x, u) & \mbox{in }  \Omega\\
			u=0 & \mbox{in }  \mathbb{R}^N\setminus \Omega
		\end{cases}
	\end{equation}
	where $\Omega$ is an open  bounded subset of $\mathbb{R}^N$ with Lipschitz boundary $\partial \Omega$, $N>2s$, $s\in ]0,1[$ and $(-\Delta)^s_A$ is the {\em fractional magnetic Laplacian} defined in \cite{DaSq}, generalizing an operator introduced in \cite{Ic} (see also \cite{IT}), as follows
	\begin{equation}
		\label{operator}
		(-\Delta)^s_Au(x):=c_{N,s}\lim_{\varepsilon\rightarrow 0^+}\int_{\R^N\setminus B_\varepsilon(x)}\frac{u(x)-e^{i(x-y)\cdot A(\frac{x+y}{2})}u(y)}{|x-y|^{N+2s}}dy \qquad x\in\RR^N
	\end{equation}
	with $u\in C_0^\infty(\mathbb{R}^N,\mathbb{C})$, $B_\varepsilon(x)$  ball of center $x$ and radius $\varepsilon$ and
	$$
	c_{N,s}= s2^{2s}\frac{{\rm \Gamma}\left(\frac{N+2s}{2}\right)}{\pi^\frac N2{\rm \Gamma}(1-s)}.
	$$
	If $N=3$, $B:=\nabla \times A$ physically represents an external magnetic field acting on a charged particle.
	
	When $A\equiv 0$ and $u\in C_0^\infty(\R^N,\RR)$, $(-\Delta)^s_A$ agrees with the standard fractional Laplacian $(-\Delta)^s$ defined as principal value integral
	$$
	(-\Delta)^su(x)=c_{N,s}
	\lim_{\varepsilon\rightarrow 0^+}\int_{\R^N\setminus B_\varepsilon(x)}\frac{u(x)-u(y)}{|x-y|^{N+2s}}dy\qquad x\in\RR^N,
	$$
	see e.g. \cite{dpv, MoRaSe}.
	
	Moreover, as observed in \cite{DaSq,Ic}, the operator defined in \eqref{operator}, can be seen as the fractional counterpart of the well known {\em  magnetic Laplacian} $(\nabla - iA)^2$, %where $A\in L^\infty_{\rm loc}(\RR^N,\RR^N)$
	%is an $L_{\rm loc}^\infty$-vector potential
	see e.g. \cite{AvHeSi, LiLo, ReSi}, which is the Schr\"odinger operator for a particle in the presence of an external magnetic field, playing a fundamental role in Quantum Mechanics in the description of the dynamics of a particle in a non-relativistic setting. 
	%In \cite{DaSq} $A$ is continuous and later on it has locally bounded gradient;  in \cite[p. 5]{LaLiRo} it is required $C^\infty$!
	In addition, the  magnetic Laplacian  turns out to be the limiting case for $s\rightarrow 1^-$ of the magnetic fractional Laplacian (see \cite{SqVo}), just as it happens for the fractional Laplacian and the classical Laplace operator, in the spirit of Bourgain, Brezis, and Mironescu \cite{BBM}.
	
	As far as it concerns the nonlinearity $g$, here we suppose that there exist $\beta_\infty \in \R$ and $f:\Omega\times\R_+\rightarrow \R$ such that
	$$
	g(x,t):=\ \beta_\infty t + f(x,t^2)t\quad \hbox{  a.e. $x\in \Omega$ and for all $t\in\R$},$$
	hence problem \eqref{P} takes the form
	\begin{equation}
		\label{Pinfty}\tag{$P_{A,\infty}$}
		\begin{cases}
			(-\Delta)^s_Au=\beta_\infty u + f(x, |u|^2)u  & \mbox{in }  \Omega\\
			u=0 & \mbox{in }  \R^N\setminus \Omega.
		\end{cases}
	\end{equation}
	
	Now, problem \eqref{Pinfty} is a perturbation of the eigenvalue problem and, following \cite{FiVe}, in Subection \ref{nonlocal} we recall some features
	about the spectrum of the integro-differential operator $(-\Delta)^s_A$,
	which are very closed to the well known ones concerning the classic Laplace operator and the fractional Laplacian (see e.g. \cite{SeVa}). \\
	Hereafter we denote respectively by $\sigma((-\Delta)^s_A)$ and $(\beta_m^s)_m$
	the spectrum and the non--decreasing, diverging sequence of the eigenvalues of the operator  $(-\Delta)^s_A$, repeated according to their multiplicity (see \cite{FiVe}).
	
	Now we state our main results, referring to \cite[Theorem 4.12]{Ra}, \cite[Theorems 0.1 and 0.3]{bbf} and  \cite[Theorem 3.1]{[BCS]} for the case of the classic Laplace operator.
	\begin{theorem}\label{main}
		Let $s\in  ]0,1[$, $N>2s$, $\Omega$ be an open bounded subset of $\RR^N$ with Lipschitz boundary.
		Assume $\beta_\infty\not\in\sigma((-\Delta)^s_A)$ and that
		\begin{enumerate}[label=$(f_\arabic*)$,ref=f_\arabic*]
			\item \label{f1} $f$ is a Carath\'eodory function and $\displaystyle \sup_{|t|\leq a}|f(\cdot,t^2)t|\in L^\infty(\Omega)$ for all $a>0$;
			\item \label{f2} there exists $\displaystyle \lim_{t\to + \infty}{f(x,t)}\ =\ 0$ uniformly with respect to a.e. $x \in \Omega$.
		\end{enumerate}
		Then, problem \eqref{Pinfty} has at least a weak solution.
		
	\end{theorem}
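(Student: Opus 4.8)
The plan is to obtain weak solutions of \eqref{Pinfty} as critical points of the energy functional $I\colon\mathbb{W}\to\R$ naturally associated with the problem on the Hilbert space $\mathbb{W}$, namely
$$I(u)=\frac12\Vert u\Vert^2-\frac{\beta_\infty}{2}\int_\Omega|u|^2\,\de x-\frac12\int_\Omega F(x,|u|^2)\,\de x,$$
where $\Vert\cdot\Vert$ denotes the magnetic Gagliardo norm of $\mathbb{W}$ and $F(x,\sigma):=\int_0^\sigma f(x,\tau)\,\de\tau$. First I would check, by means of $(f_1)$ and the (continuous and compact) embeddings of $\mathbb{W}$ into the Lebesgue spaces, that $I$ is well defined and of class $C^1$, with $I'(u)=0$ if and only if $u$ is a weak solution of \eqref{Pinfty}. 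The structural consequence of the hypotheses I would exploit is that, by $(f_2)$, the map $h(x,t):=f(x,t^2)t$ is \emph{sublinear}, i.e. for every $\eps>0$ there is $C_\eps>0$ with $|h(x,t)|\leq\eps|t|+C_\eps$ for a.e. $x\in\Omega$ and all $t$, and correspondingly $F(x,\sigma)\leq\eps\sigma+C_\eps$. Since $\beta_\infty\notin\sigma((-\Delta)^s_A)$, the quadratic part $Q(u):=\frac12\Vert u\Vert^2-\frac{\beta_\infty}{2}\int_\Omega|u|^2\,\de x$ is non-degenerate, and I would split $\mathbb{W}=\mathbb{W}^-\oplus\mathbb{W}^+$, where $\mathbb{W}^-$ is the finite-dimensional span of the eigenfunctions associated with the eigenvalues $\beta_m^s<\beta_\infty$ and $\mathbb{W}^+$ is its complement; on $\mathbb{W}^-$ one has $Q\leq-c\Vert\cdot\Vert^2$, while on $\mathbb{W}^+$ one has $Q\geq c\Vert\cdot\Vert^2$, for some $c>0$.

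The main obstacle is to establish the Palais--Smale (or Cerami) condition, and it is here that the assumption $\beta_\infty\notin\sigma((-\Delta)^s_A)$ is decisive. Given a Palais--Smale sequence $(u_n)$, I would first show that it is bounded, arguing by contradiction: if $\Vert u_n\Vert\to+\infty$, set $w_n:=u_n/\Vert u_n\Vert$; the sublinear bound gives $\Vert h(\cdot,u_n)\Vert_{L^2}/\Vert u_n\Vert\to0$, so that, writing $I'(u_n)=o(1)$ in Riesz form as $u_n=Tu_n+R(u_n)+o(1)$ in $\mathbb{W}$ — where $T\colon\mathbb{W}\to\mathbb{W}$ is the compact operator defined by $\prodot{Tu,v}=\beta_\infty\int_\Omega u\bar v\,\de x$ and $R(u_n)$ is the Riesz representative of the nonlinear functional $v\mapsto\mathrm{Re}\int_\Omega h(x,u_n)\bar v\,\de x$ — division by $\Vert u_n\Vert$ yields $w_n=Tw_n+o(1)$, since $\Vert R(u_n)\Vert/\Vert u_n\Vert\leq C\Vert h(\cdot,u_n)\Vert_{L^2}/\Vert u_n\Vert\to0$. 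The compactness of $T$ then upgrades the weak limit of $(w_n)$ to a \emph{strong} one, so that $\Vert w\Vert=1$ and $(-\Delta)^s_Aw=\beta_\infty w$, contradicting $\beta_\infty\notin\sigma((-\Delta)^s_A)$. Once $(u_n)$ is bounded, the same splitting, together with the compactness of $T$ and the continuity of the Nemytskii operator $u\mapsto h(\cdot,u)$ from $L^2$ to $L^2$ composed with the compact embedding $\mathbb{W}\hookrightarrow L^2$, gives a strongly convergent subsequence, so $I$ satisfies the Palais--Smale condition.

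It then remains to verify the geometric assumptions of the saddle point theorem, in the version of \cite[Theorem 4.12]{Ra}. On the finite-dimensional subspace $\mathbb{W}^-$, from $Q\leq-c\Vert\cdot\Vert^2$, the estimate $F(x,|u|^2)=o(|u|^2)$ and the equivalence of all norms, I would deduce $I(u)\to-\infty$ as $\Vert u\Vert\to+\infty$ in $\mathbb{W}^-$; hence on the boundary of a disk $B_R\cap\mathbb{W}^-$ one has $\max I\leq\alpha_R$ with $\alpha_R\to-\infty$. On $\mathbb{W}^+$, the bound $Q\geq c\Vert\cdot\Vert^2$ together with $\int_\Omega F(x,|u|^2)\,\de x\leq\eps\int_\Omega|u|^2\,\de x+C_\eps$ (for $\eps$ small, using the embedding $\mathbb{W}\hookrightarrow L^2$) gives $I(u)\geq(c-C'\eps)\Vert u\Vert^2-C_\eps$, so that $I$ is bounded below on $\mathbb{W}^+$ by some $\beta\in\R$. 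Choosing $R$ so large that $\alpha_R<\beta$ realizes the linking of $\partial(B_R\cap\mathbb{W}^-)$ with $\mathbb{W}^+$; since $I$ satisfies the Palais--Smale condition, the saddle point theorem produces a critical value not smaller than $\beta$, and the corresponding critical point is the desired weak solution of \eqref{Pinfty}.
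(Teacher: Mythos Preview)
Your proposal is essentially the same strategy as the paper's proof: establish the sublinear growth \eqref{infymin} from $(f_1)$--$(f_2)$, prove the Palais--Smale condition by normalizing and showing that the weak limit would be an eigenfunction corresponding to $\beta_\infty$, and then check the saddle point geometry on the spectral decomposition $\mathbb{W}^-\oplus\mathbb{W}^+$. The paper carries out the Palais--Smale step by testing $J_A'(u_m)$ against $(w_m-w)/\Vert u_m\Vert$ and against $\varphi/\Vert u_m\Vert$, while you phrase the same computation via the compact Riesz operator $T$; these are equivalent formulations of the same argument.

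One small gap to be aware of: you do not single out the case $\mathbb{W}^-=\{0\}$, i.e.\ $\beta_\infty<\beta^s_1$. The Saddle Point Theorem in the form you cite assumes the finite-dimensional piece is nontrivial; the paper handles this case separately by observing that the lower bound $I(u)\geq (c-C'\eps)\Vert u\Vert^2-C_\eps$ then holds on all of $\mathbb{W}$, so $I$ is coercive and a minimizer exists directly. This is a trivial addition to your argument, but it should be stated.
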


	\begin{theorem}\label{main1}
		Under the assumptions of Theorem \ref{main}, assume also that
		\begin{enumerate}[label=$(f_\arabic*)$,ref=f_\arabic*]
			\setcounter{enumi}{2}
			\item \label{f3} $\displaystyle \lim_{t\rightarrow 0}{f(x,t)} \ = \beta_0 \in\RR\setminus\{0\}$
			uniformly with respect to a.e. $x \in \Omega$;
		\end{enumerate}
		\begin{enumerate}[label=$(\beta)$,ref=\beta]
			\item \label{beta} there exist $h,k\in\N$, with $k\ge h$, such that $\beta_0 + \beta_\infty<\ \beta^s_h\leq \ \beta^s_k\ <\beta_\infty$.
		\end{enumerate}
		Then, problem \eqref{Pinfty} has at least $k-h+1$ distinct pairs of non--trivial weak solutions.
	\end{theorem}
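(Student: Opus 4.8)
The plan is to realise the weak solutions of \eqref{Pinfty} as critical points of the energy functional
\[
\mathcal J(u)=\frac12\|u\|_{\W}^2-\frac{\beta_\infty}{2}\int_\Omega|u|^2\,dx-\int_\Omega F(x,|u|^2)\,dx,\qquad F(x,t):=\frac12\int_0^t f(x,\tau)\,d\tau,
\]
on the natural magnetic fractional Sobolev space $\W$, and to count them through the even, pseudo-index variant of the abstract symmetric critical point theorem of Bartolo--Benci--Fortunato quoted via \cite[Theorems 0.1 and 0.3]{bbf}. First I would record that, since the magnetic Gagliardo seminorm and $\int_\Omega F(x,|u|^2)\,dx$ are invariant under $u\mapsto e^{i\theta}u$, the functional $\mathcal J$ is even and of class $C^1$, with $\mathcal J(0)=0$; its critical points are exactly the weak solutions of \eqref{Pinfty}. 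The Palais--Smale condition at every level follows, as in the proof of Theorem \ref{main}, from the non-resonance hypothesis $\beta_\infty\notin\sigma((-\Delta)^s_A)$, which bounds Palais--Smale sequences, together with the compact embedding $\W\hookrightarrow L^2(\Omega)$.

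The core of the argument is a two-sided linking geometry read off from the spectral decomposition of $(-\Delta)^s_A$. Denoting by $(e_m)_m$ an $L^2$-orthonormal system of eigenfunctions associated with $(\beta_m^s)_m$, I would set $E^-:=\mathrm{span}\{e_1,\dots,e_k\}$ and $E^+:=\overline{\mathrm{span}}\{e_m:m\ge h\}$, so that $\dim E^-=k$ and $\codim E^+=h-1$, whence $\dim E^--\codim E^+=k-h+1$. On $E^+$ one has the Rayleigh bound $\|u\|_{\W}^2\ge\beta_h^s\int_\Omega|u|^2\,dx$; using $(f_3)$ to write $F(x,|u|^2)=\tfrac{\beta_0}{2}|u|^2+o(|u|^2)$ near $0$, the strict inequality $\beta_h^s>\beta_\infty+\beta_0$ from $(\beta)$ yields a coercive lower bound $\mathcal J(u)\ge c\|u\|_{\W}^2-o(\|u\|_{\W}^2)$, and hence $\mathcal J(u)\ge\alpha>0$ for $u\in E^+$ with $\|u\|_{\W}=\rho$ small. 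Dually, on the finite-dimensional space $E^-$ one has $\|u\|_{\W}^2\le\beta_k^s\int_\Omega|u|^2\,dx$ with $\beta_k^s<\beta_\infty$; since $(f_2)$ forces $F$ to be sublinear at infinity ($F(x,t)\le\eps t+C_\eps$ uniformly in $x$), the negative quadratic part dominates and $\mathcal J(u)\to-\infty$ as $\|u\|_{\W}\to\infty$ on $E^-$, so $\mathcal J\le0$ there outside a large ball and $\sup_{E^-}\mathcal J<\infty$.

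With these two estimates and the Palais--Smale condition in hand, the pseudo-index (genus) theorem produces $\dim E^--\codim E^+=k-h+1$ pairs $\{u,-u\}$ of critical points whose critical values lie in $[\alpha,\sup_{E^-}\mathcal J]\subset(0,+\infty)$; positivity of the levels guarantees that all of them are non-trivial and, being attained at distinct critical values (or, in case of coincidence, on symmetric sets of genus large enough), they furnish at least $k-h+1$ distinct pairs of solutions.

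I expect the decisive technical point to be the near-zero estimate on $E^+$: converting the pointwise uniform limit $(f_3)$ into a genuine $o(\|u\|_{\W}^2)$ control of $\int_\Omega[F(x,|u|^2)-\tfrac{\beta_0}{2}|u|^2]\,dx$ over the \emph{infinite-dimensional} subspace $E^+$. The plan is to split $\Omega$ according to whether $|u|^2$ is below or above the threshold $\delta$ furnished by $(f_3)$: on $\{|u|^2<\delta\}$ the integrand is bounded by $\tfrac{\eps}{2}|u|^2$, giving a contribution $\le C\eps\,\|u\|_{\W}^2$; on $\{|u|^2\ge\delta\}$, using the global bound on $f$ coming from $(f_1)$--$(f_2)$ and hence the at most linear growth of $F$, Hölder's inequality together with a Sobolev embedding $\W\hookrightarrow L^{q}(\Omega)$ with $q>2$ bounds the contribution by a constant times $\|u\|_{\W}^{q}=o(\|u\|_{\W}^2)$. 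The analogous sublinear control at infinity on the finite-dimensional $E^-$ is comparatively harmless, since there all norms are equivalent.
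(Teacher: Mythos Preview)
Your proposal is correct and follows essentially the same route as the paper: apply the Bartolo--Benci--Fortunato pseudo--index theorem (Theorem~\ref{group}) with $V=\mathbb{H}_k=E^-$ and $W=\mathbb{E}_h=E^+$, obtaining the lower bound on $S_\rho\cap W$ from $(f_3)$ together with a higher--power remainder controlled by Sobolev, and the upper bound on $V$ from the sublinear growth $(f_2)$; the paper merely packages your splitting into the single pointwise inequality $F(x,t^2)\le(\beta_0+\varepsilon)t^2+c_\varepsilon|t|^{q+2}$. The only detail you leave implicit, and which the paper records, is that one must shrink $\rho$ at the end so that the lower level $\alpha=c_0$ is strictly below $c_\infty=\sup_{E^-}\mathcal J$, as required by the abstract theorem.
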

	We will show that Theorem \ref{main} is a direct consequence of the Saddle Point Theorem (see \cite[Theorem 4.6]{Ra}), while the proof of Theorem \ref{main1} is based on the application of an abstract critical point theorem in \cite[Theorem 2.9]{bbf} that we recall in Section \ref{tools} for the reader convenience.
	\begin{remark}%\label{odd}
		\rm{Observe that
			\begin{enumerate}[label=(\roman*),ref=\roman*]
				%\item[$(a)$] unlike other analogous results concerning multiplicity (see e.g. \cite[Theorem 0.3]{bbf}), due to the form of the nonlinearity, in Theorem \ref{main1}
				%we do not need to assume that $f(x,\cdot)$ is odd  a.e. $x\in\Omega$;
				\item the  statement in  Theorem \ref{main1} holds with slight changes in the proof also if, instead of condition $(\ref{beta})$, we require $\beta_\infty<\ \beta^s_h\leq \ \beta^s_k\ < \beta_0 + \beta_\infty$ (see \cite[Theorem 3.1]{[BCS]});
				\item if  $\beta_0$ in $(\ref{f3})$ belongs to $\{\pm\infty\}$, then we can reason as in \cite[Remark 3.3]{[BCS]};
				\item we remind to \cite[Remarks 1.5 and 3.2]{[BCS]} for some remarks about the case $\beta_0=0$;
				\item in case of resonance, i.e. if $\beta_\infty\in\sigma((-\Delta)^s_A)$, we can proceed as in \cite[Theorem 1.2]{[BCS]}, up to add further assumptions;
				\item the  statement of  Theorem \ref{main} is a  particular case of \cite[Theorem 1]{Fisce} (see also \cite[Remark 3.2]{BaMoBi});
				\item for $A\equiv 0$ we refer to \cite[Theorems 1.2, 1.4]{BaMoBi} (see also \cite{BaMoBi2} and references therein for further related results).
		\end{enumerate}}	
	\end{remark}
	Actually, our results are new also for the {\em local} magnetic Laplacian $(\nabla - iA)^2$. Indeed, denoting by $(\beta_m)_m$ the sequence of its eigenvalues (see Subsection \ref{local} for details), arguing as for the {\em nonlocal} operator, we can prove	
	\begin{theorem}\label{main2}
		Let $\Omega$ be an open bounded subset of $\RR^N$ with Lipschitz boundary.
		Assume $\beta_\infty\not\in\sigma((\nabla - iA)^2)$ and that $(\ref{f1})$ and $(\ref{f2})$ hold. Then, problem
		\begin{equation}
			\label{Plocinfty}\tag{$P^{{\rm loc}}_{A,\infty}$}
			\begin{cases}
				(\nabla - iA)^2u=\beta_\infty u + f(x, |u|^2)u  & \mbox{in }  \Omega\\
				u=0 & \mbox{on }  \partial\Omega.
			\end{cases}
		\end{equation}
		has at least a weak solution.
		
		Moreover, assume that  $(\ref{f3})$ holds and  $(\ref{beta})$ holds  with
		$\beta^s_h$ and $\beta^s_k$ replaced respectively by $\beta_{h}$ and $\beta_{k}$.
		Then, problem \eqref{Plocinfty} has at least $k-h+1$ distinct pairs of non--trivial weak solutions.
	\end{theorem}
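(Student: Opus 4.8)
The plan is to transplant, essentially verbatim, the variational scheme already carried out for the nonlocal operator in the proofs of Theorems \ref{main} and \ref{main1}, replacing the magnetic Gagliardo seminorm by the local magnetic Dirichlet form. First I would fix the functional setting: work in the magnetic Sobolev space $H^1_{A,0}(\Omega,\CC)$, obtained as the completion of $C_0^\infty(\Omega,\CC)$ with respect to the norm induced by $\|u\|^2:=\int_\Omega|(\nabla-iA)u|^2\,dx$, and recall from Subsection \ref{local} that this space embeds compactly into $L^2(\Omega,\CC)$ and that $(\nabla-iA)^2$ possesses a non--decreasing, diverging sequence of eigenvalues $(\beta_m)_m$ with an associated $L^2$--orthonormal basis of eigenfunctions. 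On this space the energy functional is
\[
J(u)=\frac12\|u\|^2-\frac{\beta_\infty}{2}\int_\Omega|u|^2\,dx-\int_\Omega F(x,|u|^2)\,dx,
\qquad F(x,s):=\frac12\int_0^s f(x,\tau)\,d\tau,
\]
and assumptions $(\ref{f1})$, $(\ref{f2})$ (together with $(\ref{f3})$ for the second part) guarantee that $J\in C^1(H^1_{A,0}(\Omega,\CC),\RR)$ and that its critical points are exactly the weak solutions of \eqref{Plocinfty}.

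For the existence statement I would verify the hypotheses of the Saddle Point Theorem \cite[Theorem 4.6]{Ra}, reproducing the argument used for Theorem \ref{main}. Exploiting the spectral gap $\beta_\infty\notin\sigma((\nabla-iA)^2)$, I would split $H^1_{A,0}(\Omega,\CC)=W^-\oplus W^+$ into the finite--dimensional subspace spanned by the eigenfunctions whose eigenvalue lies below $\beta_\infty$ and its orthogonal complement, so that the quadratic part of $J$ is negative definite on $W^-$ and positive definite on $W^+$, with a uniform gap. The asymptotic linearity $(\ref{f2})$ makes $F(x,|u|^2)$ of lower order than $\|u\|^2$ at infinity, whence $J$ is bounded above on $W^-$ and coercive on $W^+$, producing the required linking geometry.

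For the multiplicity statement I would instead invoke \cite[Theorem 2.9]{bbf}, as in the proof of Theorem \ref{main1}, using that $J$ is even. Condition $(\ref{f3})$ forces $J$ near the origin to behave like the quadratic form associated with the shifted parameter $\beta_0+\beta_\infty$, while at infinity it is governed by $\beta_\infty$; the hypothesis $(\ref{beta})$, read with $\beta_h,\beta_k$ in place of $\beta^s_h,\beta^s_k$, localizes precisely the $k-h+1$ eigenvalues of $(\nabla-iA)^2$ lying in the interval $(\beta_0+\beta_\infty,\beta_\infty)$. This difference between the local behaviour at $0$ and at $\infty$ is exactly the index jump that the pseudo--index machinery of \cite[Theorem 2.9]{bbf} converts into $k-h+1$ distinct pairs of nontrivial critical points.

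The step I expect to be the genuine obstacle, rather than a routine transcription, is the Palais--Smale condition for $J$. Precompactness of bounded $(PS)$ sequences is immediate from the compact embedding $H^1_{A,0}(\Omega,\CC)\hookrightarrow L^2(\Omega,\CC)$, so the real work lies in showing that $(PS)$ sequences are bounded. I would argue by contradiction: given a putative unbounded sequence $(u_n)$, normalise $v_n:=u_n/\|u_n\|$, use $(\ref{f2})$ to control the nonlinear term along the relation $J'(u_n)\to 0$, and pass to the limit to deduce that the weak limit $v\neq 0$ satisfies $(\nabla-iA)^2 v=\beta_\infty v$, contradicting $\beta_\infty\notin\sigma((\nabla-iA)^2)$. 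This is the point at which the local spectral theory and the asymptotic behaviour of $f$ must be combined with care; once compactness is secured, the remaining estimates follow the nonlocal template line by line.
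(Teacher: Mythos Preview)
Your proposal is correct and follows precisely the route the paper itself takes: the authors state explicitly that Theorem \ref{main2} is obtained ``arguing as for the nonlocal operator'', and your outline reproduces the proofs of Theorems \ref{main} and \ref{main1} (Saddle Point Theorem for existence, the pseudo--index theorem of \cite{bbf} for multiplicity) with the Gagliardo form replaced by the local magnetic Dirichlet form, including the contradiction argument for boundedness of Palais--Smale sequences that mirrors Lemma \ref{palmsmin}. The only cosmetic differences are your normalization of $J$ with the factor $\tfrac12$ (the paper drops it) and that the paper treats the case $\beta_\infty<\beta_1$ separately by direct minimization, which in your decomposition corresponds to $W^-=\{0\}$.
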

	%	\begin{corollary}%\label{application2}
		%	Let $s\in ]0,1[$, $N>2s$,  $\Omega$ be an open bounded set of $\RR^N$ with Lipschitz boundary and let $(\lambda^s_{m})_m$ be the
		%	sequence of eigenvalues of $(-\Delta)^s$ with homogeneous Dirichlet
		%	boundary data.
		%	If (\ref{f1}-(f_2)$ hold and $\beta_\infty\not\in\sigma((-\Delta)^s)$, then
		%	problem
		%	\[
		%	\leqno{({P}_{0,\infty})}\qquad\qquad
		%	\left\{
		%	\begin{array}{ll}
			%		(-\Delta)^s u=\beta_\infty u+  f(x, u) & {\mbox{ in }} \Omega\\
			%		u=0 & {\mbox{ in }} \RR^N\setminus \Omega.
			%	\end{array} \right.
		%	\]
		%	has at least a weak solution. If, in addition, $f(x,\cdot)$ is odd  a.e. $x\in\Omega$, (\ref{f3}) holds and (\ref{beta}) holds with
		%	$\beta^s_h$ and $\beta^s_k$ replaced respectively by $\lambda^s_{h}$ and $\lambda^s_{k}$, then
		%	problem $({P}_{0,\infty})$ has at least $k-h+1$ distinct pairs of non--trivial weak solutions.
		%\end{corollary}
		
		This paper is organized as follows: in Section \ref{notations} we recall some properties about the spectrum of the (local) magnetic Laplacian (Subsection \ref{local}),   depict the main aspects of our non--local setting (Subsection \ref{nonlocal}) and present some abstract tools (Subsection \ref{tools}); then,  in Section \ref{principale} we prove Theorems \ref{main} and \ref{main1}.

		\vspace{0,5cm}
		{\bf Notations}
		\begin{itemize}
			\item $B_r(x)$ is the ball in $\RR^N$ of center $x$ and radius $r$;
			\item $\mathcal{R}\, z$, $\bar z$ and $|z|$ are respectively the real part, the complex conjugate and the modulus of a given $z\in\CC$;
			\item $L^2(\Omega,\CC)$ denotes the Lebesgue space of measurable functions $u:\Omega\rightarrow\CC$ such that
			$$
			|u|^2_2=\int_\Omega|u(x)|^2\,dx<+\infty,
			$$
			being $|\cdot |$ the Euclidean norm in $\CC$, endowed with the real scalar product
			$$
			\<u,v\>_2:=\mathcal{R}\int u\bar v\, dx\quad\hbox{for all } u,v\in L^2(\Omega,\CC);$$
			\item the standard norm of $L^p$ spaces is denoted by $|\cdot|_p$;
			\item $H^1_0(\Omega)$ denotes the Sobolev space $W_0^{1,2}(\Omega, \CC)$;
			\item  $(o_m(1))_m$ denotes any infinitesimal sequence.
		\end{itemize}

		%--------------------------------------------------------------------------------
		\section{Tools and functional framework}\label{notations}
		
		In this section we introduce our functional setting  and some tools needed in the proofs of Theorems \ref{main} and \ref{main1}.

		\subsection{(Local) Magnetic Laplacian}\label{local}
		Given a $L^\infty_{\rm loc}$-vector potential $A$, let us consider the magnetic Laplacian $(\nabla - iA)^2$ in $\Omega$.
		By standard arguments, see e.g. \cite{La}, \cite{LaLiRo}, it can be proved that, considering the zero Dirichlet boundary condition, there exists an othonormal basis $(u_m)_m\subset H^1_0(\Omega)$ of eigenfunctions of the magnetic Laplacian with associated sequence of eigenvalues $(\beta_m)_m$  such that
		$$
		0< \beta_1\leq\beta_2\leq\ldots \beta_m\leq\ldots
		\quad
		\text{and}
		\quad
		\beta_m\rightarrow +\infty\,\,\mbox{as }\,\,m\rightarrow +\infty,$$
		with
		$$
		\beta_m=\frac{\displaystyle\int_\Omega|(\nabla - iA)u_m|^2\,dx}{\displaystyle\int_\Omega|u_m|^2\,dx}\qquad m\in\NN.
		$$
		Let us point out that the eigenvalues - unlike the eigenfunctions - do not change by {\em gauge invariance}, see e.g. \cite[p. 46]{La}, \cite[Appendix A]{LaLiRo} and that, denoted by $\lambda_1$ the first eigenvalue of the Laplace operator with zero Dirichlet boundary condition, it results $\beta_1\geq\lambda_1$ (see \cite[Theorem 10.4]{La}).
		For any $A\in L^\infty_{{\rm loc}}(\R^N\R^N)$,  let us consider the semi--norm
		$$
		[u]^2_{H^1_A(\Omega)}=\int_\Omega|\nabla u-iA(x)u|^2\,dx
		$$
		and, as in \cite{LiLo}, the space
		$$
		H_A^1(\Omega):=\{u\in L^2(\Omega,\CC): [u]_{H^1_A(\Omega)}<+\infty\}
		$$
		endowed with the norm
		$$
		\|u\|^2_{H^1_A(\Omega)}:=|u|_2^2+[u]^2_{H^1_A(\Omega)}.
		$$
		%Furthermore, we denote by $H^1_{0,A}(\Omega,\CC)$ the closure of $C^\infty_0(\Omega,\CC)$ in $H^1_A(\Omega,\CC)$.

		\subsection{(Non--local) Magnetic Laplacian}\label{nonlocal}
		The fractional counterpart of Subsection \ref{local} can be found in \cite{SeVa} for $A\equiv 0$ and in the general case in \cite[Section 3]{FiVe}. Next we highlight the main features.\\
		As for the classical definition of $H^s(\Omega)$, for any $s\in (0,1)$, let us consider the space
		$$
		H^s_A(\Omega):=\{u\in L^2(\Omega,\CC): [u]_{H^s_A(\Omega)}<+\infty\},
		$$
		where
		$$
		%	{\color{blue}[u]_{H^s_A(\Omega,\CC)}}{\color{red}[u]_{s,A}
			[u]_{H^s_A(\Omega)}=\left(\frac{c_{N,s}}{2}\iint_{\Omega\times\Omega}\frac{|u(x)-e^{i(x-y)\cdot A(\frac{x+y}{2})}u(y)|^2}{|x-y|^{N+2s}}\, dxdy\right)^{1/2},
			$$
			endowed with the norm
			\begin{equation}\label{nor}
				\|u\|_{H^s_A(\Omega)}:=\Big(|u|_2^2+[u]^2_{H^s_A(\Omega)}\Big)^{1/2}.
			\end{equation}
			Moreover, denoted by $H^s_A(\RR^N)$ the closure of $C^\infty_0(\R^N)$ with respect to the norm \eqref{nor},	following \cite{FiVe} (see also \cite{SeVa}), let us consider the functional space
			$$
			X_{0,A}:=\{u\in H^s_A(\RR^N): u=0 \hbox{ a.e. in }  \RR^N\setminus\Omega\}
			$$
			and define as in \cite{DaSq} the scalar product
			\begin{equation}\label{scapro}
				\<u,v\>_{X_{0,A}}:=\frac{c_{N,s}}{2}\mathcal{R}\iint_{\RR^{N}\times\RR^{N}}\frac{\left(u(x)-e^{i(x-y)\cdot A(\frac{x+y}{2})}u(y)\right)\overline{\left(v(x)-e^{i(x-y)\cdot A(\frac{x+y}{2})}v(y)\right)}}{|x-y|^{N+2s}}\, dxdy.
			\end{equation}
			The norm $\|u\|_{X_{0,A}}:=\sqrt{\<u,u\>_{X_{0,A}}}$ is equivalent to \eqref{nor}  in  $H^s_A(\R^N)$ (see \cite[Lemma 2.1]{FiPiVe})
			and $(X_{0,A},\<\cdot,\cdot\>_{X_{0,A}})$ is a real separable Hilbert space (see \cite[Lemma 7]{ReSi}).\\
			Being $\Omega$ open and bounded, $X_{0,A}\hookrightarrow H^s(\Omega)$, and, since $\partial\Omega$ is Lipschitz, $X_{0,A}\hookrightarrow\hookrightarrow L^p(\Omega,\CC)$ for any $p\in [1,2_s^\ast)$, with $2^\ast_s:=\frac{2N}{N-2s}$ (see \cite[Lemma 2.2]{FiPiVe}).\\
			A function $u\in X_{0,A}$ is a {\em weak solution} of \eqref{Pinfty} if and only if, for all $\varphi\in X_{0,A}$,
			$$
			\<u,\varphi\>_{X_0,A} = \beta_\infty\mathcal{R}\int_\Omega  u\,\bar\varphi \, dx \,
			\displaystyle +
			\mathcal{R}\int_\Omega f(x,|u|^2)u\bar\varphi\;dx.
			$$
			Following \cite{FiVe}, we call {\em variational Dirichlet eigenvalue}, or simply {\em eigenvalue}, a value $\beta\in\RR$ for which there exists a nontrivial weak solution $u\in X_{0,A}$, called {\em eigenfunction}, of
			\begin{equation}
				\label{autovalore debole}
				\begin{cases}
					(-\Delta)^s_Au=  \beta u & \mbox{ in }  \Omega\\
					u=0 & \mbox{ in }  \R^N\setminus \Omega.
				\end{cases}
			\end{equation}
			In \cite{FiVe} it is proved that eigenfunctions of \eqref{autovalore debole} corresponding to different eigenvalues are orthogonal with respect to \eqref{scapro} (see \cite[Lemma 3.2]{FiVe}) and that, proceeding by induction, it is possible to show that there exists a sequence $(\beta^s_m)_m\subset\RR$ of eigenvalues of \eqref{autovalore debole} and a sequence $(f_m)_m\subset X_{0,A}$ of associated eigenfunctions such that
			\begin{equation}\label{lambdak+1}
				\beta^s_1=\min_{u\in X_{0,A}\setminus\{0\}}\frac{\|u\|^2_{X_{0,A}}}{|u|_2^2}
				\quad
				\text{and}
				\quad
				\beta^s_{m+1}=\min_{u\in  \mathbb E_{m+1}\setminus\{0\}}\frac{\|u\|^2_{X_{0,A}}}{|u|_2^2}
				\quad \hbox{ for any } m\in\N^*,
			\end{equation}
			where
			$\mathbb{E}_{1}=X_{0,A}$,
			$$
			\mathbb{E}_{m+1}:=\{u\in X_{0,A}:\<u,f_j\>_{X_{0,A}}=0\hbox{ for every } j =1,\ldots, m\}
			$$
			and $f_1\in X_{0,A}$, $f_{m+1}\in\mathbb E_{m+1}$ for $m\geq 1$ attain the minima in \eqref{lambdak+1} (see \cite[Proposition 3.3]{FiVe}).
			
			Moreover, the eigenfunctions $f_m$ are orthogonal also with respect to the real $L^2$-scalar product (see \cite[Proposition 3.4]{FiVe}) and the eigenvalues $\beta^s_m$ satisfy
			\begin{equation}\label{ordine lambdak}
				0<\beta^s_1\leq\beta^s_2\leq\ldots\leq\beta^s_m\leq\ldots
				\quad
				\text{and}
				\quad
				\beta^s_m\rightarrow +\infty\,\,\mbox{as }\,\,m\rightarrow +\infty
			\end{equation}
			(see \cite[Proposition 3.5]{FiVe}). Furthermore $(f_m)_m$ is an orthonormal basis of $L^2(\Omega,\CC)$ and an orthogonal one of $X_{0,A}$ (\cite[Proposition 3.7]{FiVe}) and  $\beta^s_m$ is an eigenvalue with finite multiplicity for each $m\in\N$ (see \cite[Proposition 3.8]{FiVe}).
			
			Denoting for any $m\in\N^*$ by $\mathbb H_m:={\rm span}\{f_1,\ldots,f_m\}$, it results (with respect to \eqref{scapro})
			\begin{equation*}%\label{D}
				X_{0,A}=   \mathbb H_m\oplus \mathbb{E}_{m+1} \quad\hbox{ and }  \quad\mathbb{E}_{m+1}=\mathbb{H}_m^\perp=\overline{\displaystyle{\rm span}\{f_j: j\geq m+1\}}.
			\end{equation*}
			Moreover, for any $m \in\N^*$ the $m$--eigenvalue can be characterized as
			$$
			\beta^s_{m}=\max_{u\in \mathbb H_{m}\setminus{\left\{0\right\}}}\frac{\|u\|^2_{X_{0,A}}}{|u|_2^2}
			$$
   following \cite[Chapter 8]{MoRaSe}.

			\subsection{An abstract critical point theorem}\label{tools}
			Now, let $(X,\|\cdot\|_X)$ be a Banach space,
			$(X',\|\cdot\|_{X'})$ its dual, $I$ a $C^1$-functional on $X$, $I'$ its differential.
			The func\-tio\-nal $I$ satisfies the {\em Palais--Smale condition at level $c$} ($c \in \R$) if any sequence $(u_m)_m \subseteq X$ such that
			\begin{equation}\label{cer}
				\lim_{m \to +\infty}I(u_m) = c\quad\mbox{and}\quad
				\lim_{m \to +\infty}\|I'(u_m)\|_{X'} = 0
			\end{equation}
			converges in $X$, up to subsequences. If $-\infty\leq a<b\leq +\infty$, $I$ satisfies the Palais--Smale condition in $]a,b[$ if so is at each level $c\in ]a,b[$.\\

			%
			%In this setting we recall a few notions of the index theory
			%for an even functional with symmetry group $\Z_2 = \{{\rm id}, -{\rm id}\}$.
			%Set
			%\	\Sigma :=\ \{A \subseteq E:\ A \ \hbox{closed and symmetric w.r.t. the origin}
			%&\hbox{i.e. $-e \in A$ if $e \in A$}
			%\}
			%\]
			%and
			%\[
			%{\mathcal H} :=\{h\in C(E,E): h \mbox{ odd} \}.
			%\]
			%For $A \in \Sigma$, $A\ne\emptyset$, the genus of $A$ is
			%\todo[inline]{O $\N^*$.}
			%\[
			%\gamma(A) \ :=\ \inf\{m \in \N:\  \exists \psi \in C(A,\R^m\setminus\{0\})\ \hbox{s.t.}\
			%\psi(-e) = - \psi(e) \text{ for all } e\in A\},
			%\]
			%if such an infimum exists, otherwise $\gamma(A) = +\infty$. Assume $\gamma(\emptyset) = 0$.	
			%The index theory $(\Sigma,{\mathcal H}, \gamma)$ related to $\Z_2$ is also
			%called  genus (see
			%\cite[Section II.5]{s}).
			%\\
			We will use the following abstract critical point theorem whose proof  is based on the pseudo--index related to the genus
			% and $S\in\Sigma$ is the triplet
			%$(S, {\mathcal H}^\ast,\gamma^\ast)$ such that
			%${\mathcal H}^\ast$ is a group of odd homeomorphisms and $\gamma^\ast: \Sigma\longrightarrow \N\cup\{+\infty\}$ is the map defined by
			%\[
			%\gamma^\ast(A) :=\min _{h\in {\mathcal H}^\ast}\gamma(h(A)\cap S)\quad\hbox{ for all } A\in\Sigma
			%\]
			(see \cite{b} for more details). \\
			%We will apply the following abstract critical point theorem.
			\begin{theorem}[\cite{bbf}, Theorem 2.9]\label{group}
				
				Let $I\in C^1(X,\R)$ and assume that:
				\begin{enumerate}
					\item $I$ is even;
					\item $I$ satisfies the Palais--Smale condition in $\R$;
					\item there exist two closed subspaces $V,W\subset X$ such that  $\dim V<+\infty, {\rm codim}\, W<+\infty$ and two constants $c_0,c_\infty$, such that $c_\infty>c_0$, verifying the following assumptions:
					\begin{itemize}
						\item $I(u)\geq c_0 $ on $S_\rho\cap W$ (resp. on $S_\rho\cap V$), 	where $S_\rho=\{u\in X: \|u\|_X=\rho\}$;
						\item $I(u)\leq c_\infty $ on $V$  (resp. on $W$).
					\end{itemize}
				\end{enumerate}
				
				If, moreover, $\dim V >  {\rm codim}\, W$, then $I$ has at least $\dim V -  {\rm codim}\, W$ distinct pairs of critical points whose corresponding critical values belong to $[c_0,c_\infty]$.

				%	Let $a,b, c_0, c_\infty\in\bar\R$,
				%	$-\infty\leq a<c_0<c_\infty<b\leq +\infty$,
				%	$I$ be an even functional, $(\Sigma,\mathcal H,\gamma)$
				%	the genus theory on $E$, $S\in\Sigma$, $(S, {\mathcal H}^\ast,\gamma^\ast)$ the pseudo--index
				%	theory related to the genus and $S$, with
				%	\[
				%	{\mathcal H}^\ast :=\{h\in{\mathcal H}: h \mbox{ bounded homeomorphism s.t. } h(e)=e \mbox{ if } e\not\in I^{-1}(]a,b[)\}.
				%	\]
				%Assume that
				%begin{itemize}[label=(\roman*),ref=\roman*]
				%	\item[$(i)$] \label{iBBF} the
				%	functional $I$ satisfies the Palais--Smale condition in $]a,b[$$;$
				%	\item[$(ii)$] \label{iiBBF} $S\subseteq I^{-1}([c_0,+\infty[)$$;$
				%	\item[$(iii)$] \label{iiiBBF} there exist $\tilde k\in\N$ and $\tilde A\in\Sigma$ such that
				%	$\tilde A\subseteq I^{c_\infty}$ and $\gamma^\ast(\tilde A)\geq \tilde k$.
				%\end{itemize}
				%Then, setting $\Sigma_i^\ast= \{A\in \Sigma: \gamma^\ast(A)\geq i\}$, the numbers
				%\begin{equation}\label{value}
				%	c_i=\inf_{A\in \Sigma_i^\ast}\sup_{e\in A}I(e), \quad \quad i\in \{1,\ldots, \tilde k\},
				%\end{equation}
				%are critical values for $I$ and
				%\[
				%c_0\leq c_1\leq \ldots\leq c_{\tilde k}\leq c_\infty.
				%\]
				%Furthermore, if $c=c_i=\ldots =c_{i+r}$, with $i\geq 1$ and $i+r\leq \tilde k$, then $\gamma(K_c)\geq r+1$.
			\end{theorem}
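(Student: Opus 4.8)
The plan is to run Benci's pseudo-index scheme relative to the Krasnoselskii genus, exploiting the $\mathbb{Z}_2$-symmetry given by the antipodal action $u\mapsto -u$, under which $I$ is invariant by assumption $(1)$. Recall that for a closed symmetric set $A\subset X\setminus\{0\}$ (i.e. $-A=A$) the genus $\gamma(A)$ is the least integer $n$ for which there is a continuous odd map $A\to\R^n\setminus\{0\}$, with $\gamma(\emptyset)=0$ and $\gamma(A)=+\infty$ when no such map exists. I would use the standard properties of $\gamma$: monotonicity and subadditivity, monotonicity under odd continuous maps, continuity on compact symmetric sets, and the normalization $\gamma(S_\rho\cap Y)=\dim Y$ for every finite-dimensional subspace $Y\subset X$. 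Throughout I treat the configuration in which $I\ge c_0$ on $S_\rho\cap W$ and $I\le c_\infty$ on $V$, the case with the roles of $V$ and $W$ interchanged being entirely analogous.

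Let $\mathcal{A}$ denote the family of closed symmetric subsets of $X\setminus\{0\}$ and let $\Sigma$ be the group of odd homeomorphisms of $X$ that arise from (truncated) negative pseudo-gradient deformations of $I$, so that in particular every $h\in\Sigma$ reduces to the identity where $I\le c_0$. Following Benci, I define the pseudo-index of $A\in\mathcal{A}$ relative to $S_\rho\cap W$ by
\[
i^*(A)=\min_{h\in\Sigma}\gamma\big(h(A)\cap S_\rho\cap W\big).
\]
The two facts I need are, first, the subadditivity estimate $i^*(\overline{A\setminus B})\ge i^*(A)-\gamma(B)$ for $A\in\mathcal{A}$ and $B$ closed symmetric, and, second, the Borsuk--Ulam-type linking inequality
\[
i^*(S_\rho\cap V)\ \ge\ \dim V-\codim W.
\]
The latter is the crux: for any $h\in\Sigma$ the set $h(S_\rho\cap V)$ is an odd continuous image of a sphere of dimension $\dim V-1$, and intersecting it with the subspace $W$ of codimension $\codim W$ and estimating from below the genus of the resulting symmetric set by Borsuk--Ulam yields the stated bound uniformly in $h$.

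With these tools I would introduce, for $j=1,\dots,m$ with $m:=\dim V-\codim W$, the minimax values
\[
c_j^*=\inf_{A\in\mathcal{A},\ i^*(A)\ge j}\ \sup_{u\in A} I(u).
\]
Taking $h=\mathrm{id}\in\Sigma$ shows that $i^*(A)\ge 1$ forces $A\cap S_\rho\cap W\neq\emptyset$, whence $\sup_A I\ge\inf_{S_\rho\cap W}I\ge c_0$ and thus $c_j^*\ge c_0$ for every $j$; on the other hand the linking inequality gives $S_\rho\cap V\in\{i^*\ge j\}$ for all $j\le m$, so $c_j^*\le\sup_{S_\rho\cap V}I\le\sup_V I\le c_\infty$. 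Hence
\[
c_0\le c_1^*\le c_2^*\le\cdots\le c_m^*\le c_\infty.
\]
Using the Palais--Smale condition $(2)$ together with an equivariant (odd) deformation lemma, one shows in the standard way that each $c_j^*$ is a critical value of $I$; moreover, if $c_j^*=c_{j+1}^*=\cdots=c_{j+p}^*=:c$, the subadditivity of $i^*$ forces $\gamma(K_c)\ge p+1$ for the symmetric critical set $K_c=\{u\in X: I(u)=c,\ I'(u)=0\}$, so that $K_c$ is infinite and in particular contains at least $p+1$ distinct pairs $\{u,-u\}$. Since distinct critical values correspond to disjoint critical sets, summing these genus bounds over $c_1^*,\dots,c_m^*$ yields at least $m=\dim V-\codim W$ distinct pairs of critical points, all with critical value in $[c_0,c_\infty]$; these are nontrivial, as $\gamma$ is taken on sets avoiding the origin and the levels lie above $c_0$.

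The main obstacle is the construction underlying the criticality step: one must produce an odd pseudo-gradient vector field for $I$ and a corresponding equivariant deformation that both lowers $\sup_A I$ across a would-be regular level and stays inside the admissible class $\Sigma$, so that the pseudo-index of the deformed set is not decreased below $j$; this is precisely where assumptions $(1)$ and $(2)$ enter. The only other delicate point is the Borsuk--Ulam lower bound for $i^*(S_\rho\cap V)$, which has to be established uniformly over all $h\in\Sigma$.
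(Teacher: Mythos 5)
The paper itself offers no proof of this statement: it is quoted from \cite{bbf} (Theorem 2.9), with only the indication that the proof rests on the pseudo-index associated with the genus, as in \cite{b}. So your proposal must be measured against that classical pseudo-index argument, whose overall architecture you do reproduce correctly (the definition of $i^*$, the subadditivity $i^*(\overline{A\setminus B})\ge i^*(A)-\gamma(B)$, the minimax values $c_j^*$, the deformation step and the multiplicity estimate $\gamma(K_c)\ge p+1$). The genuine gap sits exactly at the point you yourself flag as the crux: with your test set $S_\rho\cap V$ and your class $\Sigma$, the linking inequality $i^*(S_\rho\cap V)\ge \dim V-\codim W$ is false. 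Nothing in the hypotheses forces $I\le c_0$ anywhere on $V$, so a truncated odd negative pseudo-gradient flow --- equal to the identity on $\{I\le c_0\}$, hence admissible for you --- can displace the compact sphere $S_\rho\cap V$ strictly inside the ball $B_\rho$; for such an $h$ one has $h(S_\rho\cap V)\cap S_\rho\cap W=\emptyset$, the pseudo-index drops to $0$, and your upper bound $c_j^*\le c_\infty$ for $j\le \dim V-\codim W$ collapses. No Borsuk--Ulam argument can rescue this: the image of a $(\dim V-1)$-dimensional sphere under an odd homeomorphism need not meet $S_\rho$ at all.

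The repair, which is what \cite{bbf} and \cite{b} actually do, is twofold. First, the minimax test set must be the whole subspace $V$ (or a large ball $\bar B_R\cap V$), not the $\rho$-sphere in $V$; note that your argument never uses the hypothesis ``$I\le c_\infty$ on all of $V$'' but only on $S_\rho\cap V$, which is a warning sign, since that hypothesis is tailored to an unbounded test set. Second, the admissible class must consist of odd homeomorphisms moving points a uniformly bounded distance (equivalently, whose inverses map bounded sets to bounded sets) --- a property of time-one maps of truncated, normalized pseudo-gradient flows that you never record, but which is precisely what carries the argument. Indeed, for such $h$ the set $O:=\{u\in V:\|h(u)\|<\rho\}$ satisfies $\|u\|\le\|h(u)\|+\sup_{v}\|h(v)-v\|$ on $O$, so it is a bounded open symmetric neighborhood of $0$ in $V$; hence its relative boundary $\partial_V O$ is compact with $\gamma(\partial_V O)=\dim V$ (this is where Borsuk--Ulam genuinely enters), $h(\partial_V O)\subset S_\rho$, and applying to $A=\partial_V O$ the standard genus estimate $\gamma(A)\le\gamma(A\cap g^{-1}(0))+\dim Z$, with $g=P\circ h$ and $P$ the projection onto a complement $Z$ of $W$, yields $\gamma(h(V)\cap S_\rho\cap W)\ge \dim V-\codim W$. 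In short, the linking comes from the unboundedness of $V$ played against the bounded displacement of admissible maps, not from a Borsuk--Ulam property of the $\rho$-sphere. With the test set and the class corrected in this way, the rest of your scheme goes through as written, modulo the standard caveat (implicit in the statement as quoted, and satisfied in the paper's application since $J_A(0)=0<c_0$) that one needs $I(0)\notin[c_0,c_\infty]$ so that the critical sets at the levels $c_j^*$ avoid the origin and their genus controls the number of pairs.
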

			
			%
			%	\begin{remark}\label{baba}{\em
					%			In the proof of Theorem \ref{main}, we will need a lower bound for the pseudo--index of a suitable $\tilde A$ as in (\ref{iiiBBF}) of Theorem \ref{group}. Indeed, taking into account  the genus theory $(\Sigma,\mathcal H,\gamma)$ on $E$
					%			and $V, W$ two closed subspaces of $E$, if
					%			$$\dim V <+\infty \quad \hbox{ and } \quad  \codim\, W<+\infty,$$
					%			then
					%			\[
					%			\gamma(V\cap h(\partial B\cap W))
					%			\geq \dim V - \codim\, W
					%			\]
					%			for every bounded $h\in\mathcal{H}$ and every open bounded symmetric neighbourhood $B$ of $0$ in $E$ (see
					%			\cite[Theorem A.2]{bbf}).
					%		}
				%	\end{remark}

			\section{Proof of Theorem \ref{main}}\label{principale}

			By
			$(\ref{f1})$  and $(\ref{f2})$, for all $\varepsilon>0$ there exists $a_\varepsilon>0$ such that
			\begin{equation}\label{infymin}
				|f(x,t^2)t|\leq\varepsilon |t| + a_\varepsilon,
				\quad \hbox{ for a.e. } x\in \Omega, \hbox{ for all }  t\in\R.
			\end{equation}
			
			The weak solutions of problem \eqref{Pinfty}  are the critical points of the $C^1$--functional
			\begin{equation*}*\label{1bis}
				J_A(u)
				:=
				\|u\|^2_{X_{0,A}}
				-\beta_\infty |u|_2^2
				- \int_\Omega F(x,|u|^2) \;dx,
			\end{equation*}
			defined in $X_{0,A}$, with $F(x,t):=\displaystyle\int_0^t f(x,s) \; ds$ and, for every $u,\varphi\in X_{0,A}$,
			$$
			J'_A(u)[\varphi]=
			\<u,\varphi\>_{X_0} - \beta_\infty\mathcal{R}\int_\Omega  u\,\bar\varphi \;dx \, -
			\mathcal{R}\int_\Omega f(x,|u|^2)u\, \bar\varphi\;dx.
			$$

			Under the assumptions of Theorems \ref{main} and \ref{main1}, the functional $J_A$ satisfies the following compactness property.
			\begin{lemma}\label{palmsmin}
				Assume that $(\ref{f1})$ and $(\ref{f2})$ hold. Then,
				if $\beta_\infty\not\in\sigma((-\Delta)^s_A)$, the functional $J_A$ satisfies the Palais--Smale condition in $\RR$.
			\end{lemma}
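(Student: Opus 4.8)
The plan is to follow the standard scheme for asymptotically linear problems: first prove that every Palais--Smale sequence is bounded in $X_{0,A}$, and then upgrade the resulting weak convergence to strong convergence by exploiting the compact embedding $X_{0,A}\hookrightarrow\hookrightarrow L^2(\Omega,\CC)$ (recall $2<2^\ast_s$ since $N>2s$). Throughout I would use the linear growth bound \eqref{infymin} as the only quantitative information on $f$.

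Fix $c\in\R$ and let $(u_m)_m\subset X_{0,A}$ satisfy \eqref{cer}. I would establish boundedness by contradiction, via a blow-up argument. Suppose $\|u_m\|_{X_{0,A}}\to +\infty$ and set $v_m:=u_m/\|u_m\|_{X_{0,A}}$, so that $\|v_m\|_{X_{0,A}}=1$. Up to a subsequence, $v_m\rightharpoonup v$ in $X_{0,A}$ and $v_m\to v$ in $L^2(\Omega,\CC)$ and a.e.\ in $\Omega$. Dividing the identity $J_A'(u_m)[\varphi]=o_m(1)\|\varphi\|_{X_{0,A}}$ by $\|u_m\|_{X_{0,A}}$ gives, for every $\varphi\in X_{0,A}$,
$$\<v_m,\varphi\>_{X_{0,A}}-\beta_\infty\mathcal{R}\int_\Omega v_m\bar\varphi\,dx-\mathcal{R}\int_\Omega\frac{f(x,|u_m|^2)u_m}{\|u_m\|_{X_{0,A}}}\bar\varphi\,dx=o_m(1).$$
By \eqref{infymin} one has $|f(x,|u_m|^2)u_m|/\|u_m\|_{X_{0,A}}\le\varepsilon|v_m|+a_\varepsilon/\|u_m\|_{X_{0,A}}$, so the last integral is bounded by $\varepsilon|v_m|_2|\varphi|_2+o_m(1)$; letting $m\to+\infty$ and then $\varepsilon\to 0^+$ shows it is infinitesimal. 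In the limit $v$ therefore solves $\<v,\varphi\>_{X_{0,A}}=\beta_\infty\mathcal{R}\int_\Omega v\bar\varphi\,dx$ for all $\varphi$, i.e.\ $v$ is a weak solution of \eqref{autovalore debole} with $\beta=\beta_\infty$. Since $\beta_\infty\notin\sigma((-\Delta)^s_A)$, a nontrivial such $v$ would make $\beta_\infty$ an eigenvalue, a contradiction; hence $v=0$.

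To close the argument I would test the rescaled identity with $\varphi=v_m$, obtaining
$$1-\beta_\infty|v_m|_2^2-\mathcal{R}\int_\Omega\frac{f(x,|u_m|^2)u_m}{\|u_m\|_{X_{0,A}}}\bar v_m\,dx=o_m(1).$$
The same estimate bounds the nonlinear term by $\varepsilon|v_m|_2^2+o_m(1)$, hence it vanishes as above, while $|v_m|_2^2\to|v|_2^2=0$ because $v=0$. Passing to the limit yields $1=0$, the desired contradiction, so $(u_m)_m$ is bounded.

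Finally, from boundedness I extract $u_m\rightharpoonup u$ in $X_{0,A}$ with $u_m\to u$ in $L^2(\Omega,\CC)$. Testing $J_A'(u_m)[\varphi]=o_m(1)\|\varphi\|_{X_{0,A}}$ with $\varphi=u_m-u$ and splitting $\<u_m,u_m-u\>_{X_{0,A}}=\|u_m-u\|_{X_{0,A}}^2+\<u,u_m-u\>_{X_{0,A}}$, the last scalar product is infinitesimal by weak convergence, the term $\beta_\infty\mathcal{R}\int_\Omega u_m\overline{(u_m-u)}\,dx$ tends to $0$ by strong $L^2$ convergence, and the nonlinear term is controlled by $(\varepsilon|u_m|_2+Ca_\varepsilon)|u_m-u|_2\to 0$ thanks again to \eqref{infymin} and $|u_m-u|_2\to 0$. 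Collecting these facts gives $\|u_m-u\|_{X_{0,A}}^2=o_m(1)$, i.e.\ $u_m\to u$ in $X_{0,A}$, which proves the lemma. I expect the boundedness step to be the main obstacle, as it is the only point where both the asymptotic linearity and the non--resonance hypothesis $\beta_\infty\notin\sigma((-\Delta)^s_A)$ are genuinely exploited; the passage from weak to strong convergence is routine once the compact embedding is available.
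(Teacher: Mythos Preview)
Your argument is correct and follows essentially the same blow-up scheme as the paper's proof: rescale a diverging Palais--Smale sequence, use \eqref{infymin} to kill the nonlinear term, and contradict $\beta_\infty\notin\sigma((-\Delta)^s_A)$; then pass from weak to strong convergence via the compact $L^2$ embedding by testing with $u_m-u$. The only cosmetic difference is the order in which the contradiction is reached: the paper first tests the rescaled identity with $w_m-w$ to obtain $w_m\to w$ \emph{strongly} in $X_{0,A}$ (hence $\|w\|_{X_{0,A}}=1$ and $w\neq 0$) and then shows $w$ is an eigenfunction, whereas you first pass to the weak limit to get that $v$ solves the eigenvalue problem, conclude $v=0$, and then derive $1=0$ by testing with $v_m$; both variants are standard and equivalent.
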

			\begin{proof}
				Let  $c\in\R$ and $(u_m)_m$ be a sequence in $X_{0,A}$ such that \eqref{cer} holds.\\
				% Then, for all $\varphi\in X_{A,0}$,
				% \begin{equation}\label{ro3min}
					%  \<u_m,\varphi\>_{X_{0,A}} - \beta_\infty\mathcal{R}\int_\Omega  u_m\,\overline\varphi \;dx  -
					% \mathcal{R}\int_\Omega f(x,|u_m|^2)u_m\bar\varphi\;dx = o_m(1)
					% \end{equation}
				To prove that $(u_m)_m$ is bounded in $X_{0,A}$, arguing by contradiction, we assume that $\|u_m\|_{X_{0,A}}\rightarrow +\infty$ as $m\rightarrow + \infty$. Since $(w_m)_m:=( u_m/{\|u_m\|_{X_{0,A}}})_m$  is bounded in $X_{0,A}$, there exists $w\in X_{0,A}$ such that, up to a subsequence, $(w_m)_m$ converges to $w$
				weakly in $X_{0,A}$ and strongly in $L^{2}(\Omega,\CC)$.\\
				Using the boundedness of $(w_m)_m$, by \eqref{cer} we get
				\begin{equation}\label{ro8min}
					\begin{split}
						o_m(1)
						&=
						J_A'(u_m)[(w_m-w)/\|u_m\|_{X_{0,A}}]\\
						&= \<w_m,w_m-w\>_{X_{0,A}}
						- \beta_\infty\mathcal{R}\int_\Omega w_m\,\overline{(w_m-w)} \;dx
						- \mathcal{R}
						\int_\Omega \frac{f(x,|u_m|^2)u_m}{\|u_m\|_{X_{0,A}}}\overline{(w_m-w)}\;dx.
					\end{split}
				\end{equation}
				Moreover, by the convergence of $(w_m)_m$ to $w$ in $L^{2}(\Omega,\CC)$, it follows that
				$$
				\displaystyle{\left| \int_\Omega w_m \overline{(w_m-w)} \;dx\right|\leq
					\int_\Omega |w_m||w_m-w| \;dx\leq
					|w_m|_2|w_m-w|_2=o_m(1)}
				$$
				and, using also  \eqref{infymin}, we infer that
				$$
				\left| \int_\Omega \frac{f(x,|u_m|^2)u_m}{\|u_m\|_{X_{0,A}}}\overline{(w_m-w)}\;dx \right|\leq \varepsilon
				|w_m|_2|w_m-w|_2 + \frac{a_\varepsilon |w_m-w|_1}{\|u_m\|_{X_{0,A}}} =o_m(1).
				$$
				Then, by \eqref{ro8min}, it follows that $\<w_m, w_m-w\>_{X_{0,A}} =o_m(1)$, therefore $w_m\rightarrow  w$ in $X_{0,A}$ and $w\not=0$.\\
				Now, reasoning as in \eqref{ro8min}, for all $\varphi\in X_{0,A}$
				\begin{equation}\label{ro8xxmin}
					\begin{split}
						o_m(1)
						&=
						J_A'(u_m)[\varphi/\|u_m\|_{X_{0,A}}]\\
						&= \<w_m,\varphi\>_{X_{0,A}}
						- \beta_\infty\mathcal{R}\int_\Omega w_m\,\overline{\varphi} \;dx
						- \mathcal{R}
						\int_\Omega \frac{f(x,|u_m|^2)u_m}{\|u_m\|_{X_{0,A}}}\overline{\varphi}\;dx.
					\end{split}
				\end{equation}
				Moreover, for every $\varepsilon>0$ let $\tilde{m}_\varepsilon\in\N$ be such that, for every $m\geq \tilde{m}_\varepsilon$, $\|u_m\|_{X_{0,A}}>a_\varepsilon/\varepsilon$, where $a_\varepsilon>0$ is given in \eqref{infymin}. Then, using \eqref{infymin}, for every $\varepsilon>0$ and $m\geq \tilde{m}_\varepsilon$,
				$$
				\left| \int_\Omega \frac{f(x,|u_m|^2)u_m}{\|u_m\|_{X_{0,A}}}\overline{\varphi}\;dx \right|\leq \varepsilon
				|w_m|_2|\varphi|_2 + \frac{a_\varepsilon |\varphi|_1}{\|u_m\|_{X_{0,A}}}
				\leq C\varepsilon,
				$$
				for a suitable $C>0$, so that, for every $\varphi\in X_{0,A}$,
				\begin{equation*}%\label{roxxmin}
					\lim_{m\rightarrow + \infty}\mathcal{R}\int_\Omega \frac{f(x,|u_m|^2)u_m}{\|u_m\|_{X_{0,A}}}\overline\varphi\;dx =0.
				\end{equation*}
				Thus, passing to the limit in \eqref{ro8xxmin}, we get that, for every $\varphi\in X_{0,A}$,
				$$
				\<w,\varphi\>_{X_{0,A}}  = \beta_\infty\mathcal{R}\int_\Omega w\overline\varphi \;dx,
				$$
				namely $\beta_\infty\in\sigma((-\Delta)^s_A)$, against our assumption.\\
				Hence $(u_m)_m$ is bounded in $X_{0,A}$ and, due to the reflexivity of our space, there exists $u_0\in X_{0,A}$ such that, up to a subsequence, $u_m\rightharpoonup u_0$ in $X_{0,A}$, $u_m\rightarrow  u_0$ in $L^2(\Omega,\CC)$,
				$$
				\langle J'_A(u_m), u_m- u_0\rangle \rightarrow  0 \quad \hbox{ as } m\rightarrow +\infty
				$$
				and, by \eqref{infymin},
				$$
				\left|\int_\Omega f(x,|u_m|^2)u_m\overline{(u_m-u_0)}\;dx\right|\rightarrow 0  \quad \hbox{ as } m\rightarrow +\infty.
				$$
				Therefore, reasoning as before, $u_m\rightarrow u_0$ in $X_{0,A}$ and the proof is complete.
			\end{proof}

			Now we are ready to prove our main results.
			
			\begin{proof}[Proof of Theorem \ref{main}]
				If $\beta^s_1 < \beta_\infty$, the statement follows by a standard application of the Saddle Point Theorem (see \cite[Theorem 4.6]{Ra}). In particular, due to Lemma \ref{palmsmin}, we only need to check its {\em geometrical} assumptions.\\
				First of all observe that, for every $m\in\mathbb{N}^*$,
				\begin{equation}
					\label{findim}
					\|u\|_{X_{0,A}}^2\leq \beta_m^s |u|_2^2
					\text{ for every } u\in \mathbb{H}_m
				\end{equation}
				and
				\begin{equation}
					\label{infdim}
					\|u\|_{X_{0,A}}^2\geq \beta_m^s |u|_2^2
					\text{ for every } u\in \mathbb{E}_m.
				\end{equation}
				Then, using \eqref{ordine lambdak}, let us consider $\nu\in\mathbb{N}^*$ such that $\beta^s_{\nu} < \beta_\infty < \beta^s_{\nu +1}$ and
				\begin{equation}
					\label{condeps}
					\varepsilon\in \big(0,\min\{\beta_\infty- \beta^s_{\nu}, \beta^s_{\nu+1}-\beta_\infty \}\big).
				\end{equation}
				Thus, by  \eqref{infymin} and \eqref{infdim}, for every $\varepsilon$ there exists $C_\varepsilon>0$ such that for every $u\in\mathbb{E}_{\nu+1}$
				\[
				J_A(u)
				\geq
				\|u\|_{X_{0,A}}^2
				- \beta_\infty |u|_2^2
				-\varepsilon |u|_2^2
				-C_\varepsilon \|u\|_{X_{0,A}}
				\geq
				\left(1 -\frac{\beta_\infty+\varepsilon}{\beta^s_{\nu+1}}\right)\|u\|_{X_{0,A}}^2
				-C_\varepsilon \|u\|_{X_{0,A}}
				\]
				so that, using \eqref{condeps}, we obtain that there exists $\alpha_1 >0$ such that, for all $u\in\mathbb{E}_{\nu+1}$, $J_A(u)\geq -\alpha_1$, namely \cite[($I_4$) of Theorem 4.6]{Ra}.\\
				Moreover, by \eqref{infymin}, \eqref{findim} and \eqref{condeps}, for all  $u\in \mathbb{H}_{\nu}$
				%, there exists $C_\varepsilon>0$ such that			
				%\[
				%	\int_0^{|t|} 2 f(s^2) s ds
				%	= \int_0^{|t|^2}  f(y)  dy
				%	= F(|t|^2)
				%	\]
				%	so that
				%	\[
				%	|F(|t|^2)|
				%	\leq  2 \int_0^{|t|}  |f(|s|^2) s| ds
				%	\leq 2 \int_0^{|t|} (\varepsilon s + a_\varepsilon) ds
				%	= \varepsilon |t|^2 + 2 a_\varepsilon |t|
				%	\]
				%}
			%\begin{equation}\label{E}
			%	J_A(u)
			%	\leq
			%	\|u\|_{X_{0,A}}^2
			%	-\beta_\infty |u|_2^2
			%	+ \varepsilon |u|^2_2
			%	+ C_\varepsilon|u|_2
			%	\leq
			%	(\beta_{\bar k}
			%	- \beta_\infty
			%	+ \varepsilon) |u|_2^2
			%	+ C_\varepsilon|u|_2
			%	\to -\infty
			%\end{equation}
			%\todo[inline]{Oppure (io preferisco la seconda).}
			\begin{equation*}%\label{E}
				J_A(u)
				\leq
				\|u\|_{X_{0,A}}^2
				-\beta_\infty |u|_2^2
				+ \varepsilon |u|^2_2
				+ C_\varepsilon \|u\|_{X_{0,A}}
				\leq
				\left(1 -\frac{\beta_\infty-\varepsilon}{\beta^s_{\nu}}\right) \|u\|_{X_{0,A}}^2
				+ C_\varepsilon\|u\|_{X_{0,A}}
				\to -\infty
			\end{equation*}
			as $\|u\|_{X_{0,A}}\to +\infty$,  and so, for $\rho>0$ large enough, that $J_A(u)\leq -\alpha_2$ for all $u\in \mathbb{H}_{\nu} \cap
			S_\rho$ where $S_\rho:=\{u\in X_{0,A}: \|u\|_{X_{0,A}}= \rho\}$, with $\alpha_2>\alpha_1$, getting \cite[($I_3$) of Theorem 4.6]{Ra}.\\
			On the other hand, by \eqref{lambdak+1}, if $0<\beta_\infty< \beta^s_1$,
			\[
			J_A(u)
			\geq
			\|u\|_{X_{0,A}}^2
			- \beta_\infty |u|_2^2
			-\varepsilon |u|_2^2
			-C_\varepsilon \|u\|_{X_{0,A}}
			\geq
			\left(1 -\frac{\beta_\infty+\varepsilon}{\beta^s_{1}}\right)\|u\|_{X_{0,A}}^2
			-C_\varepsilon \|u\|_{X_{0,A}}
			\]
			and, if $\beta_\infty\leq 0$,
			\[
			J_A(u)
			\geq
			\|u\|_{X_{0,A}}^2
			- \beta_\infty |u|_2^2
			-\varepsilon |u|_2^2
			-C_\varepsilon \|u\|_{X_{0,A}}
			\geq
			\left(1 -\frac{\varepsilon}{\beta^s_{1}}\right)\|u\|_{X_{0,A}}^2
			-C_\varepsilon \|u\|_{X_{0,A}}
			\]
			for all $u\in X_{0,A}$. Thus, in both cases, if $\varepsilon\in(0,\beta^s_1-\beta_\infty)$ and $\varepsilon\in(0,\beta^s_1)$ respectively, we have that the functional $J_A$ is bounded from below and so we get a weak solution for problem \eqref{Pinfty} by a direct minimization argument.

			\end{proof}

			\begin{proof}[Proof of Theorem \ref{main1}]

				To get the statement, first observe that, by $(\ref{f2})$ and $(\ref{f3})$,
				$$
				\lim_{|t|\rightarrow +\infty}\frac{F(x,t^2)}{t^{2}} \ = 0
				\quad
				\text{and}
				\quad
				\lim_{t\rightarrow 0}\frac{F(x,t^2)}{t^{2}} \ = {\beta_0},
				$$
				uniformly with respect to a.e. $x \in \Omega$. Therefore, for every $\varepsilon>0$ there exist $r_\varepsilon>\delta_\varepsilon>0$ such that
				\begin{equation*}
					%\label{pr1}
					|F(x,t^2)|\leq \varepsilon t^2, \text{ if } |t|> r_\varepsilon
					\quad\text{and}\quad
					\left|F(x,t^2)-{\beta_0}t^2\right|\leq \varepsilon t^2,   \text{ if } |t|< \delta_\varepsilon,
				\end{equation*}
				for a.e. $x\in \Omega$.\\
				Moreover, by $(\ref{f1})$, taking any $q\in ]0, 4s/(N-2s)]$,  there exists $c_{r_\varepsilon}>0$ such that
				\begin{equation*}%\label{pr3}
					|F(x,t^2)|%\leq \max_{|t|\leq {R_\varepsilon}}|f(x,t)|R_\varepsilon
					\leq c_{r_\varepsilon}{|t|}^{q+2}, \quad \text{ if } \delta_\varepsilon\leq |t|\leq r_\varepsilon \text{ and for a.e. } x\in\Omega.
				\end{equation*}
				Thus, for any $\varepsilon>0$, there exists $c_\varepsilon>0$ such that
				%	{\color{red}$$
					%		F(x,t^2)
					%		\leq\frac{\beta_0 + \varepsilon}2t^2 + {k_\varepsilon}|t|^{q+2},
					%		$$
					%		for a.e. $x\in\Omega$ and for all $t\in\R$.}
				$$
				F(x,t^2)
				\leq (\beta_0 + \varepsilon) t^2 + {c_\varepsilon}|t|^{q+2},
				\quad
				\text{for a.e. } x\in\Omega \text{ and for all } t\in\R,
				$$
				so that, for all $u\in X_{0,A}$, using
				Sobolev inequalities,
				%	{\color{red}\begin{equation}\label{nicola}
						%			J_A(u)
						%			\geq \frac12\|u\|_{X_{0,A}}^2 - \frac{\beta_\infty + \beta_0 + \varepsilon}2|u|_2^2 -
						%			k_\varepsilon' \|u\|_{X_{0,A}}^{q+2}\quad\hbox{ for all } u\in X_{0,A},
						%	\end{equation}}
				\begin{equation*}%\label{nicola}
					J_A(u)
					\geq
					\|u\|_{X_{0,A}}^2
					- (\beta_\infty + \beta_0 + \varepsilon)|u|_2^2
					- c_\varepsilon' \|u\|_{X_{0,A}}^{q+2} \quad\hbox{ for all } u\in X_{0,A},
				\end{equation*}
				for a suitable $c'_\varepsilon>0$.\\	
				Therefore, given $h$ as in $(\ref{beta})$, by \eqref{infdim} we obtain that, for $\varepsilon\in(0,\beta^s_h-(\beta_\infty+\beta_0))$, there exists $\tilde{c}_\varepsilon>0$ such that, for every $u\in \mathbb{E}_h$,
				%{\color{red}\[
					%	J_A(u)\geq\frac 1{2} \left(1 - \frac{\lambda_\infty + \lambda_0 + \varepsilon}{\lambda_{h}}\right)\|u\|_{X_{0,A}}^2 - k_\varepsilon' \|u\|_{X_{0,A}}^{q+2}\quad\hbox{ for all } u\in X_{0,A}
					%	\]}
				\[
				J_A(u)
				\geq
				\left(1 - \frac{\beta_\infty + \beta_0 + \varepsilon}{\beta^s_{h}}\right)\|u\|_{X_{0,A}}^2
				- c_\varepsilon' \|u\|_{X_{0,A}}^{q+2}
				\geq
				\tilde{c}_\varepsilon\|u\|_{X_{0,A}}^2
				- c_\varepsilon'\|u\|_{X_{0,A}}^{q+2}.
				\]		
				Hence we can conclude that, if $\rho$ is small enough, there exists $c_0>0$ such that
				\begin{equation}\label{merco}
					J_A(u)\geq c_0\quad\hbox{ for all } u\in S_\rho\cap \mathbb{E}_{h},
				\end{equation}
				with $S_\rho$ as in the proof of Theorem \ref{main}.\\
				Moreover, taking $k$ as in $(\ref{beta})$, $\varepsilon\in (0,(\beta_\infty-\beta^s_k)/\beta^s_k)$, and using \eqref{infymin} %opportunamente
				and \eqref{findim}, we have that there exist $C_\varepsilon, c_\infty>0$ such that for all $u\in \mathbb{H}_k$,
				%{\color{red}$$
					%	J_A(u)\leq \frac 12\left(\beta_k+\varepsilon - \beta_\infty\right)|u|_2^2  + C_\varepsilon|u|_2
					%	$$
					%	and, as  $\mathbb{H}_k$ is a $k$--dimensional subspace, the functional $J_A$ tends to $-\infty$
					%	as $\|u\|_{X_{0,A}}$ diverges in $\mathbb{H}_k$, so there exists $c_\infty=c_\infty(\varepsilon)$ (with $c_\infty>c_0$), such that inequality \eqref{abil} holds.}
				%{\color{blue}
					\begin{equation*}
						%\label{abil}
						J_A(u)
						\leq
						\left(1+\varepsilon - \frac{\beta_\infty}{\beta_k^s}\right)\|u\|_{X_{0,A}}^2
						+ C_\varepsilon\|u\|_{X_{0,A}}
						\leq
						c_\infty.			\end{equation*}
					Finally, to have $c_0<c_\infty$ it is enough to take $\varepsilon\in(0,\min\{\beta^s_h-(\beta_\infty+\beta_0),(\beta_\infty-\beta^s_k)/\beta^s_k)\})$ and $\rho$ sufficiently small in \eqref{merco}.\\
					%	{\color{red}{\em Claim 2:}
						%		Let $\beta_k$ as in (\ref{beta}), $\mathbb{H}_{k}$ as in \eqref{D} and %$c_0$ as in Claim 1. Then, there exists   $c_\infty>c_0$ such that
						%		\begin{equation}\label{abil}
							%				J_A(u)\leq c_\infty\quad\hbox{ for all }  u\in \mathbb{H}_k.
							%		\end{equation}}
					Now we can conclude the proof. Indeed, $J_A$ is even and  by Lemma \ref{palmsmin} it satisfies the Palais--Smale condition in $\R$; moreover	taking $W=\mathbb{E}_{h}$ and $V=\mathbb{H}_{k}$, Theorem 	\ref{group} applies and $J_A$ has at least $k-h+1$ distinct pairs of critical points.

				\end{proof}

				\noindent {\bf Acknowledgements.} {The paper is realized with the auspices of the INdAM - GNAMPA 2023 Projects: {\it Equazioni nonlineari e problemi tipo Calabi--Bernstein} and {\it Metodi variazionali per alcune equazioni di tipo Choquard}, and PRIN projects 2017JPCAPN   {\it Qualitative and quantitative aspects of nonlinear PDEs}, P2022YFAJH {\it Linear and Nonlinear PDEs: New directions and Applications}, and 2022BCFHN2 {\it Advanced theoretical aspects in PDEs and their applications}.}

			\end{document}